\documentclass[11pt,oneside,a4paper]{amsart}
\usepackage{amsmath,amssymb,amsthm}
\usepackage{mathrsfs,scalefnt,tikz} 
\usepackage{tikz,tikz-cd}
\usepackage{ytableau}
\usepackage[pagebackref]{hyperref} 
\hypersetup{colorlinks,linkcolor=blue,urlcolor=blue,citecolor=blue}
\usepackage[alphabetic,abbrev,nobysame]{amsrefs} 

\newtheorem{thm}{Theorem}[section]
\newtheorem*{thm*}{Theorem}
\newtheorem{lem}[thm]{Lemma}
\newtheorem*{lem*}{Lemma}
\newtheorem{prop}[thm]{Proposition}
\newtheorem*{prop*}{Proposition}
\newtheorem{cor}[thm]{Corollary}
\newtheorem*{cor*}{Corollary}

\newtheorem*{conj*}{Conjecture}

\theoremstyle{definition}
\newtheorem{defn}[thm]{Definition}
\newtheorem*{defn*}{Definition}
\newtheorem{example}[thm]{Example}
\newtheorem*{example*}{Example}

\newtheorem*{prob*}{Problem}

\newtheorem*{probs*}{Problems}
\newtheorem{rmk}[thm]{Remark}
\newtheorem*{rmk*}{Remark}

\newtheorem*{quest*}{Question}

\newcommand{\C}{\mathcal{C}} 
\newcommand{\Z}{\mathbb{Z}} 
\newcommand{\End}{\operatorname{End}} 
\newcommand{\V}{\mathbf{V}} 
\newcommand{\vv}{\mathbf{v}} 
\newcommand{\Sym}{\mathfrak{S}} 
\newcommand{\Ptn}{\mathcal{P}} 
\newcommand{\im}{\operatorname{im}} 
\newcommand{\B}{\mathcal{B}} 
\newcommand{\opp}{{\operatorname{op}}} 
\newcommand{\ui}{\underline{i}}
\newcommand{\uj}{\underline{j}}
\newcommand{\up}{\underline{p}}
\newcommand{\uq}{\underline{q}}
\newcommand{\GL}{\operatorname{GL}} 

\newcommand{\LLIS}{\operatorname{LLIS}} 
\newcommand{\LLDS}{\operatorname{LLDS}} 

\renewcommand{\labelenumi}{(\alph{enumi})}
\parskip=1pt
\allowdisplaybreaks

\title[New applications of increasing and decreasing subsequences]%
      {Canonical bases and new applications \\
        of increasing and decreasing subsequences \\
        to invariant theory}

  \author{Chris Bowman}
  \address{Department of Mathematics, 
    University of York, Heslington, York, YO10 5DD, UK}
  \email{Chris.Bowman-Scargill@york.ac.uk}

  \author{Stephen Doty}
  \address{Department of Mathematics and Statistics, Loyola University
    Chicago, Chicago, IL 60660 USA}
  \email{doty@math.luc.edu}

  \author{Stuart Martin}
  \address{DPMMS, Centre for Mathematical Sciences, Wilberforce Road,
  Cambridge, CB3 0WB, UK}
  \email{sm137@cam.ac.uk}
  \subjclass{05A05,05E10,20C30,20C08}
  \keywords{Symmetric groups, longest increasing subsequences,
    RSK-correspondence, centraliser algebras}

\begin{document}
\begin{abstract}\noindent
In 2012 Raghavan, Samuel, and Subrahmanyam showed that the
Kazhdan--Lusztig basis for the Iwahori--Hecke algebra in type A
provides a ``canonical'' basis for the centraliser algebra of the
Schur algebra acting on tensor space.  In 2022 the second author found
a similar result for the centraliser of the partition algebra acting
on the same tensor space.  Each basis is indexed by permutations.  We
exploit these bases to show that the linear decomposition of an
arbitrary invariant (in either centraliser algebra) depends integrally
on its entries, and describe combinatorial rules that pick out minimal
sets of such entries.
\end{abstract}
\maketitle

\vspace{-0.3cm}
\section*{\bf Introduction}
\noindent 
We study centraliser algebras for the actions on tensor space of two
finite dimensional algebras: the Schur algebra and partition algebra.
Thanks to two instances of Schur--Weyl duality, these centraliser
algebras are isomorphic to quotients of group algebras of symmetric
groups. 
 This paper focuses on the problems:
\begin{enumerate}
\item[(Q1)] Find a ``canonical'' basis for the centraliser algebras.
\item[(Q2)] Given such a basis, calculate the coefficients expressing a
  given invariant as a linear combination of the basic ones.
\end{enumerate}
Previous results \cites{RSS,D} solve (Q1) for both centraliser
algebras (see Theorem \ref{t:2}).  The main results of this paper are
given in Section \ref{s:2}, where we provide a combinatorial algorithm
for computing the coefficients of a given invariant expressed as a
linear combination of the canonical basis, solving (Q2) for both
algebras.

Thus our work has connections to: invariant theory, symmetric groups,
Schur--Weyl dualities, combinatorics (including the RSK
correspondence), and computer science.  Our motivation comes from the
connections to symmetric groups and invariant theory (Schur--Weyl
duality), and we hope that the existence of the algorithm will foster
subsequent applications to these areas.

In the case of the Schur algebra invariants, if $\Bbbk$ is any
infinite field then the first of the aforementioned algebras is
\[
\End_{S(n,r)}(\V^{\otimes r}) = \End_{\GL_n(\Bbbk)}(\V^{\otimes r})
\]
where $\V$ is the natural module for $\GL_n(\Bbbk)$ and $S(n,r)$ is
the Schur algebra.  Thus, over an infinite field our results are
applicable to the classical problem of decomposing $\GL_n$-invariants.
Furthermore, the above centraliser algebras are related to
(generalised) Temperley--Lieb algebras studied in
\cites{Fan,Fan:97,Stembridge,Haerterich}.

We believe that the use of the word ``canonical'' to describe the
bases in Theorem~\ref{t:2} is justified by the origins of the bases in
the papers \cites{RSS,D}, where it was shown that they arise as
leading terms of certain Kazhdan--Lusztig basis \cite{KL} elements
(for the Iwahori--Hecke algebra of type A) evaluated at $q=1$.
The canonical bases of  \cites{RSS,D} consist of the (image of) permutations satisfying
a combinatorial condition formulated in terms of the length of their
longest increasing or decreasing subsequences.

The paper is organized as follows. In Section~\ref{s:1} we summarise
the aforementioned previous results that underlie this work. The main
results are formulated in Section~\ref{s:2} and proved in
Sections~\ref{s:3} and~\ref{s:4}. Finally, some applications are
considered in Section~\ref{s:5}, one of which answers a question of
Rouquier.

\section{\bf Summary of prior literature}\label{s:1}\noindent
Let $\Bbbk$ be a unital commutative ring, and fix a basis $\{\vv_1,
\dots, \vv_n\}$ for a free $\Bbbk$-module $\V$ of rank $n$.  Let
$\V^{\otimes r} = \V \otimes \cdots \otimes \V$ ($r$ factors) be the
$r$th tensor power of $\V$.  The parameters $n$, $r$ are
independent. \emph{Throughout the rest of the paper, $n$ and $r$
always refer to the rank of $\V$ and the number of tensor factors in
$\V^{\otimes r}$, respectively.}

There are natural right and left commuting actions of the two
symmetric groups $\Sym_r$, $W_n$ on $\V^{\otimes r}$, given
respectively by:
\begin{align}
  (\vv_{j_1} \otimes \vv_{j_2} \otimes \cdots \otimes \vv_{j_r})\cdot
  \sigma &= \vv_{j_{\sigma(1)}}\otimes \vv_{j_{\sigma(2)}} \otimes
  \cdots \otimes \vv_{j_{\sigma(r)}} \label{e:1} \\
  w \cdot (\vv_{j_1} \otimes \vv_{j_2} \otimes \cdots \otimes \vv_{j_r})
  & = \vv_{w(j_1)}\otimes \vv_{w(j_2)} \otimes \cdots \otimes \vv_{w(j_r)}
  \label{e:2}
\end{align}
for any $\sigma \in \Sym_r$, $w \in W_n$.  The action of $\Sym_r$ is
by place-permutation and the action of $W_n$ is the diagonal extension
of the natural permutation action of $W_n$ on the given basis. We use
different symbols for the two symmetric groups to avoid the
possibility of confusing one for the other. Even when $n=r$ they act
differently on the tensor space $\V^{\otimes r}$. Note that $W_n$ may
be identified with the Weyl group of $\GL(\V) \cong \GL_n(\Bbbk)$ and
the action of $W_n$ is the one induced by the natural diagonal action
of $\GL(\V)$.

In Theorems \ref{t:1} and \ref{t:2} below we summarise the recent
results underlying the question addressed by this paper. Let
\begin{align*}
\Phi = \Phi_{n,r}: \Bbbk[\Sym_r]^\opp \to \End_\Bbbk(\V^{\otimes
  r}),\quad \Psi = \Psi_{n,r}: \Bbbk[W_n] \to \End_\Bbbk(\V^{\otimes
  r})
\end{align*}
be the representations afforded by the respective actions \eqref{e:1},
\eqref{e:2}. Each representation $\Phi = \Phi_{n,r}$ and $\Psi =
\Psi_{n,r}$ depends on both $n$ and $r$.

\begin{thm}\label{t:1}
Let $\Bbbk$ be a unital commutative ring. Let $n \ge 1$ and $r\ge 0$
be integers. Then
\[
\text{\upshape{(i)} $\im(\Phi_{n,r}) = \End_{S(n,r)}(\V^{\otimes r})$, and
\upshape{(ii)} $\im(\Psi_{n,r}) = \End_{\Ptn_r(n)}(\V^{\otimes r})$.}
\]
\end{thm}

\begin{proof}[Proof sketch, with historical context]
We will now outline how these results were proved, including pointers
to the literature to aid the reader.  As an $(\GL(\V),
\Bbbk[\Sym_r]^\opp)$-bimodule, $\V^{\otimes r}$ satisfies Schur--Weyl
duality, in the sense that the image of each action generates the
centraliser for the other. When $\Bbbk=\mathbb{C}$ this is a classical
result of Schur, and over infinite fields it is due to \cites{Green,
  DP}.  The result was extended to sufficiently large finite fields in
\cite{BD}. The paper \cite{Cruz} showed that it fails in general when
$\Bbbk$ is a commutative ring.

However, the Schur algebra $S(n,r)$ is defined over $\Z$ and acts on
$\V^{\otimes r}$ as described in \cite{Green}*{\S2.6}. That action
commutes with the place-permutation action of $\Sym_r$, so
$\V^{\otimes r}$ is an $(S(n,r), \Bbbk[\Sym_r]^\opp)$-bimodule. By
extending an argument in \cite{BD}, \cite{Cruz}*{Thm.~3.4} proved
equality (i), for any unital commutative ring $\Bbbk$. Incidentally,
this implies that Schur--Weyl duality holds for the $(S(n,r),
\Bbbk[\Sym_r]^\opp)$-bimodule structure on $\V^{\otimes r}$, in the
same generality, thanks to the well known equality $S(n,r)
= \End_{\Sym_r}(\V^{\otimes r})$, which goes all the way back to
Schur. This equality holds for any unital commutative ring $\Bbbk$
(see the proof of \cite{Green}*{(2.6c)}).

The partition algebra $\Ptn_r(n)$ acts on $\V^{\otimes r}$ as
described in \cite{HR}*{eq.~(3.2)}.  As a
$(\Bbbk[W_n],\Ptn_r(n))$-bimodule, $\V^{\otimes r}$ also satisfies
Schur--Weyl duality over any unital commutative ring $\Bbbk$, by the
main result of \cite{BDM} (see also \cite{Donkin}*{\S6}), so in
particular equality (ii) holds.
\end{proof}

\begin{thm}\label{t:2}
Let $\LLIS$ (resp., $\LLDS$) denote the length of a longest increasing
(resp., decreasing) subsequence of a permutation.  Let
\begin{align*}
\B &= \B_{n,r} = \{\sigma \in \Sym_r \mid \LLDS(\sigma) \le n\}, \\
\C &= \C_{n,r} = \{w \in W_n \mid \LLIS(w) \ge n-r\}.
\end{align*}
Then we have
\begin{enumerate}
 \item [(i)] $\{\Phi(\sigma)\}_{\sigma \in \B_{n,r}}$ is a
   $\Bbbk$-basis of $\End_{S(n,r)}(\V^{\otimes r})$.
 \item [(ii)] $\{\Psi(w)\}_{w \in \C_{n,r}}$ is a $\Bbbk$-basis of
   $\End_{\Ptn_r(n)}(\V^{\otimes r})$.
\end{enumerate}
\end{thm}

See \cite{RSS}*{Thm.~1} for the statement for $\Phi$, and 
\cite{D}*{Thm.~1} for the statement for $\Psi$.

\begin{rmk}
Although the statements of both \cite{RSS}*{Thm.~1} and
\cite{D}*{Thm.~1} are statements about representations of symmetric
groups, the proof in both cases takes place in the corresponding
Iwahori--Hecke algebra and relies on deeper results of Geck's paper
\cite{Geck} relating its Kazhdan--Lusztig bases to its Murphy bases.
The connection to increasing and decreasing subsequences comes from
the Robinson--Schensted correspondence \cite{Schensted}.  Furthermore,
the proof of \cite{D}*{Thm.~1} follows from previous joint work
\cite{BDM:kernel} of the authors and results of Donkin
\cite{Donkin}*{\S8} that obtains a $q$-analogue of one of the results
in \cite{BDM:kernel}.
\end{rmk}

Combining Theorems \ref{t:1} and \ref{t:2} gives the following
immediate consequence. Write $\bar{x} = x+J$ for the image of an
element $x$ under the canonical quotient map $A \to A/J$, where $A$ is
an algebra and $J$ an ideal.

\begin{cor}\label{c:1}
Let $\Bbbk$ be a unital commutative ring.
\begin{enumerate}
 \item [(i)] $\overline{\B}_{n,r} = \{\bar{\sigma} \mid \sigma \in
   \B_{n,r}\}$ is a $\Bbbk$-basis of $\Bbbk[\Sym_r]^\opp/\ker(\Phi_{n,r})$.
 \item [(ii)] $\overline{\C}_{n,r} = \{\bar{w} \mid w \in \C_{n,r}\}$
   is a $\Bbbk$-basis of $\Bbbk[W_n]/\ker(\Psi_{n,r})$.
\end{enumerate}
\end{cor}

Notice that $\B_{1,r} = \{id\}$ consists of only the identity
permutation, $\B_{r-1,r} = \Sym_r \setminus \{\sigma_0\}$, where
$\sigma_0$ is the element of longest Coxeter length, and we have
inclusions
\[
\B_{1,r} \subset \B_{2,r} \subset \cdots \subset \B_{r,r} = \Sym_r .
\]
(Clearly, $\B_{n,r} = \Sym_r$ stabilises if $n\ge r$).  Furthermore,
in case (ii), $\C_{n,0} = \{id\}$, $\C_{n,n-2} = W_n \setminus
\{w_0\}$, where $w_0$ is the element of longest Coxeter length, and we
have inclusions
\[
\C_{n,0} \subset \C_{n,1} \subset \cdots \subset \C_{n,n-1} = W_n .
\]
(In this case, $\C_{n,r} = W_n$ stabilises if $r \ge n-1$).

\begin{example}\label{ex:bases}
In this and subsequent examples, we write permutations in the one-line
notation, omitting punctuation (commas) between entries. We have:
\begin{gather*}
\B_{1,3} = \{(123)\}, \quad \B_{2,3} = \Sym_{3} \setminus \{(321)\},
\quad \text{and} \quad \B_{3,3} = \Sym_3.
\\
\C_{3,0} = \{(123)\},\quad \C_{3,1} =
W_3 \setminus \{(321)\},\quad  \text{and} \quad \C_{3,2} = W_3.
\\
\B_{1,4} = \{(1234)\},\\
\B_{2,4} = \{(1234),(1243),(1324),(2134),(1342),(1423),(2143), \\ (2314),
  (3124),(2341),(2413),(3142),(4123),(3412)\}, \\ \B_{3,4} =
\Sym_4 \setminus \{(4321)\},\quad \text{and}\quad \B_{4,4} = \Sym_4.
\\
\C_{4,0} = \{(1234)\},\\
\C_{4,1} = \{(1234),(1243),(1324),(2134),(1342), \\
  (1423),(2314),(3124),(2341),(4123)\}, \\ \C_{4,2} =
W_4 \setminus \{(4321)\},\quad \text{and}\quad \C_{4,3} = W_4.
\end{gather*}
It is known \cite{D}*{Prop.~4} that $\C_{n,1}$ is the set of
``consecutive cycles'' in $W_n$, for any $n$. (A permutation is a
consecutive cycle if it or its inverse is a cycle of the form $i
\mapsto i+1 \mapsto \cdots \mapsto i+k-1 \mapsto i$ for some $i,k$.)
\end{example}

The kernels of the maps $\Phi_{n,r}$ and $\Psi_{n,r}$ are well
understood; see \cite{DP}*{Thm.~4.2} for the former and
\cite{BDM:kernel} for the latter. In particular, the maps $\Phi_{n,r}$
and $\Psi_{n,r}$ are injective (the corresponding symmetric groups act
faithfully) if and only if $n \ge r$ and $r \ge n-1$, respectively.
Cellular bases for each kernel are known; see \cite{Haerterich} for
$\Phi$ and \cite{BDM:kernel} for $\Psi$. The kernels are cell ideals,
so each quotient is a cellular algebra, in the sense of \cite{GL}. 

\begin{cor}\label{c:2}
Let $\Bbbk$ be a commutative ring with unit. Define $\mathbf{B}(n,r)
:= \Bbbk[\Sym_r]^\opp/\ker(\Phi_{n,r})$ and $\mathbf{C}(n,r) :=
\Bbbk[W_n]/\ker(\Psi_{n,r})$. These are cellular algebras, and we have
the following.
\begin{enumerate}
\item[(i)] Fixing $r$ and letting $n$ vary from $1$ to $r$ produces
  a sequence of (surjective) quotient maps
  \[
  \mathbf{B}(1,r) \leftarrow \mathbf{B}(2,r) \leftarrow \cdots
  \leftarrow \mathbf{B}(r,r) = \Bbbk[\Sym_r]^\opp
  \]
  such that each $\sigma$ in $\Sym_r$ maps to the corresponding coset
  in each quotient.

\item[(ii)] Similarly, fixing $n$ and letting $r$ vary from $0$ to
  $n-1$ produces a sequence of quotient maps
  \[
  \mathbf{C}(n,0) \leftarrow \mathbf{C}(n,1) \leftarrow \cdots
  \leftarrow \mathbf{C}(n,n-1) = \Bbbk[W_n]
  \]
  such that each $w$ in $W_n$ maps to the corresponding coset
  in each quotient.
\end{enumerate}
\end{cor}

\begin{proof}
To prove (i), it suffices to observe that there is a descending series
of two-sided ideals
\[
\ker(\Phi_{1,r}) \supset \ker(\Phi_{2,r}) \supset \cdots \supset
\ker(\Phi_{r,r}) = (0).
\]
To prove (ii), we observe that there is a descending series
of two-sided ideals
\[
\ker(\Psi_{n,0}) \supset \ker(\Psi_{n,1}) \supset \cdots \supset
\ker(\Psi_{n,n-1}) = (0).
\]
These observations follow from the descriptions of the kernels
given in the references cited in the remarks preceding the proof.
\end{proof}

In light of Corollary~\ref{c:2}, it is natural to ask whether or not
the bases in Corollary~\ref{c:1} are cellular bases.
Example~\ref{ex:bases} shows that the answer is negative in
general. To see this, recall that K\"{o}nig and Xi \cite{KX} showed
that an algebra with a finite basis is cellular if and only if it has
a certain chain of ideals known as a cell chain. In particular, if an
algebra with an anti-involution is cellular, it would have to contain
a cell ideal or be isomorphic to a matrix algebra.  But the ideal
generated by any of the elements of $\bar{\C}_{3,1}$ is the entire
algebra $\mathbf{C}(3,1)$, so it cannot have a proper cell ideal
containing an element of $\bar{\C}_{3,1}$.  Furthermore, $|\C_{3,1}| =
|\bar{\C}_{3,1}| = 5$, so $\mathbf{C}(3,1)$ cannot be isomorphic to a
matrix algebra, either. The same argument shows that $\bar{\B}_{2,3}$
is not a cellular basis of $\mathbf{B}(2,3)$. The argument easily
extends to other cases.

\begin{rmk}
When a permutation is written in the one-line notation, reversing its
order interchanges the notions of increasing and decreasing
subsequences. This order reversal map is an involution on the ambient
symmetric group \cite{Schensted}. It is easy to see that order
reversal is the map given by right multiplication\footnote{This
assumes that we compose maps from right-to-left, so that $(f \circ
g)(x) = f(g(x))$ If we compose maps from left-to-right then we should
replace right multiplication by left multiplication.} by the (unique)
element of longest Coxeter length. The kernels of $\Phi_{n,r}$ and
$\Psi_{n,r}$ are invariant under such multiplication. 
It follows that order reversal induces an involution on both
$\Bbbk[\Sym_r]^\opp/\ker(\Phi_{n,r})$ and
$\Bbbk[W_n]/\ker(\Psi_{n,r})$. So we can interchange the notions of
$\LLIS$, $\LLDS$ in the definition of the sets $\B_{n,r}$ and
$\C_{n,r}$ and all the results of this section remain true. This
implies that all the results of this paper have a ``dual'' form in
which such an interchange has been effected.
\end{rmk}

\section{\bf The main results}\label{s:2}\noindent
In this section we fix $n$ and $r$ and write $\Phi = \Phi_{n,r}$,
$\Psi = \Psi_{n,r}$, $\B=\B_{n,r}$, $\C = \C_{n,r}$.  The problem
considered in this paper is the following: given any invariant $X$ in
$\End_{S(n,r)}(\V^{\otimes r})$, $\End_{\Ptn_r(n)}(\V^{\otimes r})$,
find the (unique) scalars $c(\sigma)$, $c(w)$ in $\Bbbk$ such that
\begin{equation}\label{e:problem}
  \sum_{\sigma \in \B} c(\sigma)\Phi(\sigma) = X, \qquad \sum_{w \in
    \C} c(w) \Psi(w) = X 
\end{equation}
respectively.  In other words, we are considering (Q2) for the above
centraliser algebras, with respect to the bases of Theorem~\ref{t:2}.

We need some notation in order to formulate our main result. Set $[n]
:= \{1,\dots, n\}$. Given any multi-index $\uj = (j_1, j_2, \dots,
j_r)$ in $[n]^r$, set
\[
\vv_{\uj} := \vv_{j_1} \otimes \vv_{j_2} \otimes \cdots \otimes \vv_{j_r}.
\]
The set $\{\vv_{\uj} \mid \uj \in [n]^r\}$ is a $\Bbbk$-basis of
$\V^{\otimes r}$.  Let $(\uj,\sigma)\mapsto \uj \sigma$,
$(w,\uj)\mapsto w\uj$ denote the actions of $\Sym_r$, $W_n$ on $[n]^r$
such that
\[
\uj \sigma = (j_{\sigma(1)}, \dots, j_{\sigma(r-1)}, j_{\sigma(r)}), \quad
w \uj = (w(j_1), \dots, w(j_{r-1}), w(j_r)).
\]
This notation is compatible with equations \eqref{e:1} and \eqref{e:2}
in the sense that $\vv_{\uj} \cdot \sigma = \vv_{\uj \sigma}$ and $w
\cdot \vv_{\uj} = \vv_{w \uj}$.  Taking matrix coordinates in equation
\eqref{e:problem} with respect to this basis, we obtain the
(overdetermined, in general) linear systems
\begin{equation}\label{e:4}
  \sum_{\sigma \in \B} c(\sigma)\Phi(\sigma)_{\ui,\uj} = X_{\ui,\uj},
  \qquad \sum_{w \in \C} c(w) \Psi(w)_{\ui,\uj} = X_{\ui,\uj} 
\end{equation}
as $(\ui,\uj)$ vary over all pairs of multi-indices in $[n]^r \times
[n]^r$. By Theorem~\ref{t:2}, the coefficents in each system are
uniquely determined. What is needed is a method of picking out a
minimal linear subsystem producing the unique solution in each case.
Our contribution is to find combinatorial maps that do this job.  Here
is our main result.

\begin{thm}\label{t:3}
Injective maps $\varphi: \B \to [n]^r \times [n]^r$ and
$\psi: \C \to [n]^r \times [n]^r$ exist such that:
\begin{enumerate}
\item[(i)] If $\varphi(\sigma) = (\ui,\uj)$ then $\sigma$ is the
  unique element of $\B$ of minimal Coxeter length satisfying
  $\ui\sigma = \uj$.
\item[(ii)] If $\psi(w) = (\ui,\uj)$ then $w$ is the unique
  element of $\C$ of minimal Coxeter length satisfying $w \uj = \ui$.
\end{enumerate}
\end{thm}

Part (i) is proved in Proposition~\ref{p:2} and part (ii) in
Proposition~\ref{p:3} below.  Given any $\sigma'$ in $\B$ and any $w'$
in $\C$ we write
\[
\varphi(\sigma') = (\ui(\sigma'), \uj(\sigma')) \quad\text{and}\quad
\psi(w') = (\ui(w'), \uj(w')).
\]
For $X = [X_{\ui,\uj}]_{\ui,\uj \in [n]^r}$ in either
$\End_{S(n,r)}(\V^{\otimes r})$ or $\End_{\Ptn_r(n)}(\V^{\otimes r})$,
consider the corresponding linear subsystems
\begin{equation}\label{e:5}
\begin{aligned}
\textstyle \sum_{\sigma \in \B} c(\sigma)\Phi(\sigma)_{\ui(\sigma'),
  \uj(\sigma')} &= X_{\ui(\sigma'), \uj(\sigma')}, \\ \textstyle
\sum_{w \in \C} c(w) \Psi(w)_{\ui(w'), \uj(w')} &= X_{\ui(w'),
  \uj(w')}
\end{aligned}
\end{equation}
of the systems in \eqref{e:4}, as $\sigma'$ varies over $\B$ and $w'$
varies over $\C$. By Theorem~\ref{t:3}, these subsystems have
unitriangular coefficient matrices
\[
A_\Phi = \big[\Phi(\sigma)_{\ui(\sigma'),\uj(\sigma')} \big]_{\sigma,\sigma' \in \B}
\; , \qquad A_\Psi = \big[\Psi(w)_{\ui(w'),\uj(w')} \big]_{w,w' \in \C}
\]
if their rows and columns are ordered by any total ordering of $\B$,
$\C$ compatible with the Coxeter length function.  We observe that the
matrices $A_\Phi$ and $A_\Psi$ are $(0,1)$-matrices. To see this,
notice that for any $\sigma \in \Sym_r$ or $w \in W_n$, we have
\begin{equation}
  \Phi(\sigma)_{\ui,\uj} = \delta_{i_{\sigma(1)},j_1} \cdots
  \delta_{i_{\sigma(r)},j_r}, \qquad
  \Psi(w)_{\ui,\uj} = \delta_{i_1,w(j_1)} \cdots \delta_{i_r,w(j_r)}
\end{equation}
for any $\ui,\uj$ in $[n]^r$, where $\delta_{ij}$ is the usual
Kronecker delta symbol. It follows that the entries of $A_\Phi^{-1}$
and $A_\Psi^{-1}$ are integers. In summary, we have the following.

\begin{cor}
The $c(\sigma)$ or $c(w)$ in \eqref{e:problem} are respectively
expressible as integral linear combinations of the
$X_{\ui(\sigma'),\uj(\sigma')}$ or $X_{\ui(w'),\uj(w')}$, as $\sigma'$
varies over $\B$ and $w'$ over $\C$.
\end{cor}

Theorem~\ref{t:3} can be said to ``solve'' the problem of computing
the $c(\sigma)$, $c(w)$ in equations \eqref{e:problem}, by reducing
the problem to the calculation of the inverse of a unitriangular
$(0,1)$-matrix. Such a calculation is relatively easy for a digital
computer, as no multiplications or divisions are required.

\begin{rmk}
In all the examples known to the authors, it turns out that the
entries of $A_\Phi^{-1}$ and $A_\Psi^{-1}$ all belong to the set
$\{0,1,-1\}$. We do not know whether or not such a statement holds in
general.
\end{rmk}

\section{\bf The map $\varphi$}\label{s:3}\noindent
We continue to assume that $\Bbbk$ is a unital commutative ring. We
write $\B = \B_{n,r}$, $A = A_\Phi$ in this section.  We now define a
map $\varphi': \Sym_r \to [r]^r \times [r]^r$ which, for any positive
integer $n$, restricts to a map $\varphi: \B \to [n]^r \times [n]^r$
having the properties needed to complete the proof of Theorem
\ref{t:3}(i). The map $\varphi'$ depends on $r$ and is independent of
$n$.

\begin{defn}\label{d:initial}
The \emph{initial increasing subsequence} $M = (y_{1}, y_{i_2}, \dots,
y_{i_\ell})$ of a given finite sequence $y = (y_i)$ of real numbers is
defined as follows:
\begin{enumerate}
\item Set $i_1 = 1$ and $M_1 = (y_{i_1}) = (y_1)$.
\item For any $a \ge 1$, assuming that $(i_1, \dots, i_a)$ and
  $M_a=(y_{i_1}, \dots, y_{i_a})$ have been selected so far, let
  $i_{a+1}$ be the unique smallest index $i$ for which $i_{a+1} > i_a$ and
  $y_{i_{a+1}} > y_{i_a}$, if possible, and set $M_{a+1} = (y_{i_1},
  \dots, y_{i_a}, y_{i_{a+1}})$.
\item The process terminates when there is no such element in the
  given sequence. Let $M = M_{\mathrm{max}}$, the final subsequence at
  time of termination.
\end{enumerate}
\end{defn}

\begin{example}\label{ex:initial}
The initial increasing subsequence of $y = (5,2,1,3,7,4,6,9)$ is
$(5,7,9)$. Note that this is not a longest increasing subsequence.
\end{example}

Now let $x=(x_1,x_2,\dots, x_r)$, $y=(y_1,y_2,\dots, y_r)$ be given
finite sequences of the same length, each consisting of
\emph{distinct} real numbers (or elements of any linearly ordered
set), such that the first sequence is increasing: $x_1<x_2<\cdots
<x_r$.  We may display the pair of sequences in a two-line notation by
writing
\[
(x/y) = 
\begin{pmatrix}
  x_1&x_2&\cdots&x_r\\
  y_1&y_2&\cdots&y_r
\end{pmatrix} .
\]
The standard way of computing the length of the longest decreasing
subsequence of $y$ is the tableau-based Robinson--Schensted algorithm
\cites{Schensted,Fulton}. But there are other (less efficient) methods
of computing that statistic.  The map $\varphi$ is based on one such
method. As we show in Proposition~\ref{p:1}, it computes the length of
a longest decreasing subsequence.

Given $(x/y)$ with $x$ increasing, let $(y_1, y_{i_2}, \dots,
y_{i_\ell})$ be the initial increasing subsequence of $y$.  Let
$(x_{1}, x_{i_2}, \dots, x_{i_\ell})$ be the corresponding
(necessarily increasing) subsequence of $x$. Write
\[
\pi_1(x/y) =
\begin{pmatrix}
  x_{1}&x_{i_2}&\cdots&x_{i_\ell} \\
  y_{1}&y_{i_2}&\cdots&y_{i_\ell}
\end{pmatrix} 
\]
for the resulting pair of increasing subsequences of $(x/y)$. The
\emph{complement} $(x'/y') = (x/y) \setminus \pi_1(x/y)$ is obtained
by excising the columns of $\pi_1(x/y)$ from $(x/y)$ and reindexing.
If $y$ is increasing then the complement is empty and
$\pi_1(x/y)=(x/y)$. Otherwise, the complement $(x'/y')$ is a new pair
of sequences satisfying the same conditions as $(x/y)$. So we may
apply $\pi_1$ again to the pair $(x'/y')$, obtaining $\pi_2(x/y) =
\pi_1(x'/y')$, a second pair of increasing subsequences of
$(x/y)$. Continuing in this manner, we obtain the \emph{canonical
factorisation} of the given pair $(x/y)$, as the configuration
\[
\pi(x/y) := \big( \pi_1(x/y) \mid \pi_2(x/y) \mid \cdots \mid
\pi_k(x/y) \big)
\]
consisting of pairs of increasing subsequences of $(x/y)$. The integer $k$
is the \emph{length} of the canonical factorisation.
 (See \cite{greene} for a similar construction.)

\begin{example}\label{ex:1}
If $(x/y)$ is the pair of sequences given by 
\[
(x/y) = \begin{pmatrix}
  1&2&3&4&5&6&7&8&9 \\
  4&2&1&7&3&8&9&6&5
\end{pmatrix}
\]
then we have
\[
\pi_1(x/y) = \begin{pmatrix}
  1&4&6&7 \\
  4&7&8&9
\end{pmatrix},\quad
\pi_2(x/y) = \begin{pmatrix}
  2&5&8 \\
  2&3&6
\end{pmatrix}, \quad
\pi_3(x/y) = \begin{pmatrix}
  3&9 \\
  1&5
\end{pmatrix} .
\]
We write this succinctly in the form
\[
\pi(x/y) = \left(
\begin{array}{cccc|ccc|cc}
  1&4&6&7&2&5&8&3&9\\
  4&7&8&9&2&3&6&1&5
\end{array}
\right) 
\]
in which vertical lines are used to separate the various $\pi_i(x/y)$
in their natural order $\pi_1(x/y), \dots, \pi_k(x/y)$.
\end{example}

We apply the above observations to permutations. By writing $\sigma
\in \Sym_r$ in the usual two-line notation, we obtain a pair of
sequences $(x/y)$ as above (with the first-line sequence $x =
(1,2,\dots,r)$ increasing). For instance, the pair of sequences
$(x/y)$ written in Example \ref{ex:1} is the two-line notation for the
permutation $\sigma = (421738965)$ in the one-line notation. Write
\[
\pi(\sigma) = \big( \pi_1(\sigma) \mid \pi_2(\sigma) \mid \cdots \mid
\pi_k(\sigma) \big)
\]
with the obvious interpretation, namely, that $\pi_i(\sigma) =
\pi_i(x/y)$, for all $i= 1,\dots,k$, where $k$ is the length of the
canonical factorisation.

We now define the promised map $\varphi': \Sym_r \to [r]^r \times
[r]^r$, which will soon be used to define the map $\varphi$.  Given
$\sigma \in \Sym_r$, let $k$ be the length of its canonical
factorisation, and set $[k] := \{1,\dots, k\}$. Let $i = (i_1, i_2,
\dots, i_r)$, $j = (j_1,j_2,\dots,j_r)$ be the sequences in $[k]^r$
defined by the conditions
\begin{align*}
i_\alpha = c &\iff \alpha \text{ appears in the first line of } \pi_c(\sigma) \\
j_\alpha = c &\iff \alpha \text{ appears in the second line of } \pi_c(\sigma).
\end{align*}
In other words, we colour each number in $\pi_c(\sigma)$ by the colour
$c$, for each $c$ in $[k]$, and let $\ui$, $\uj$ be the corresponding
sequences of colours in the first and second line of $\pi(\sigma)$,
resp. Let $\varphi'(\sigma) = (\ui,\uj)$. Evidently, $\varphi'(\sigma)$
is in $[r]^r \times [r]^r$.

If $\sigma_0$ is the element of $\Sym_r$ of longest Coxeter length,
then
\[
\varphi'(\sigma_0) = ((1,2,\dots,r), (r,r-1,\dots,1)),
\]
which shows that the image of $\varphi'$ is not contained in any $[n]^r
\times [n]^r$, for $n < r$. At the opposite extreme, if $id$ is the
identity permutation then
\[
\varphi'(id) = ((1,1,\dots,1), (1,1,\dots,1)) .
\]
For the permutation $\sigma = (421738965)$ of Example~\ref{ex:1}, we
have
\[
\varphi'(\sigma) = ((1,2,3,1,2,1,1,2,3), (3,2,2,1,3,2,1,1,1)).
\]

The following lemma is similar to Lemma 5 in \cite{Schensted}.

\begin{lem}\label{lem:1}
Given any term of the second line of any $\pi_c(\sigma)$, with $c>1$,
there exists a term of the second line of $\pi_{c-1}(\sigma)$ which is
larger and appears further to the left in $\sigma$.
\end{lem}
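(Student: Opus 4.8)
The plan is to track, for each $\pi_c(\sigma)$, where its second-line entries came from during the excision process. Recall that $\pi_c(\sigma)$ is produced by applying $\pi_1$ to the complement $(x'/y')$ obtained after excising $\pi_1(\sigma),\dots,\pi_{c-1}(\sigma)$; its second line is the \emph{initial increasing subsequence} of the remaining $y$-sequence, read left to right. So fix a term $y_\beta$ in the second line of $\pi_c(\sigma)$, occurring at position $\beta$ in the one-line notation of $\sigma$. I want to find a term $y_\alpha$ in the second line of $\pi_{c-1}(\sigma)$ with $\alpha < \beta$ and $y_\alpha > y_\beta$.

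First I would argue that $y_\beta$ is \emph{not} part of the second line of any earlier $\pi_{c'}(\sigma)$ with $c' < c$, so in particular it survived into the complement on which $\pi_{c-1}$ was run but was not picked by the initial-increasing-subsequence procedure at stage $c-1$. Run the stage-$(c-1)$ procedure on that complement: it greedily builds the initial increasing subsequence $M_{c-1}$ (the second line of $\pi_{c-1}(\sigma)$) by repeatedly taking the leftmost later term that exceeds the current last term. Since $y_\beta$ was \emph{not} selected, at the moment the greedy procedure's last selected value, say $y_\alpha$, had been placed with $\alpha < \beta$, we must have $y_\alpha \ge y_\beta$ — for otherwise $y_\beta$ (or something even earlier and larger than $y_\alpha$ but still $< y_\beta$... ) — more carefully: if every selected term sitting to the left of $\beta$ were $< y_\beta$, then when the greedy procedure reached the selected term immediately preceding position $\beta$ in index, $y_\beta$ would have been an eligible (later, larger) candidate, and by minimality of index the procedure would have taken $y_\beta$ or an even earlier eligible term, and then continued; iterating, $y_\beta$ could not have been skipped over by a smaller value. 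Since the $y$'s are distinct, $y_\alpha \ge y_\beta$ forces $y_\alpha > y_\beta$. This gives a term of $M_{c-1}$ strictly to the left of $\beta$ and strictly larger, which is exactly the claim once we note that position in the complement corresponds (order-preservingly) to position in $\sigma$.

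The key step — and the main obstacle — is making the greedy-skip argument precise: I must show that if $y_\beta$ is an entry of the stage-$(c-1)$ complement that is \emph{not} chosen into $M_{c-1}$, then some chosen entry of $M_{c-1}$ lies to its left with a larger value. The clean way is induction along the construction in Definition~\ref{d:initial}: let $(y_{i_1},\dots,y_{i_a})$ be the selected terms with $i_a < \beta \le i_{a+1}$ (with $i_{a+1}$ possibly the termination point, i.e.\ no further term exists). If $y_{i_a} < y_\beta$, then $y_\beta$ is itself a valid candidate for $i_{a+1}$, so either $i_{a+1}=\beta$ — contradicting that $y_\beta$ was not chosen — or $i_{a+1} < \beta$ with $y_{i_{a+1}} < y_\beta$, and we repeat the argument with $a+1$ in place of $a$; this cannot continue indefinitely since indices strictly increase and are bounded by $\beta$, so eventually we reach a selected $y_{i_b}$ with $i_b < \beta$ and $y_{i_b} \ge y_\beta$, hence $> y_\beta$. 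Taking $y_\alpha = y_{i_b}$ finishes it.

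Finally I would translate back: the complement at stage $c-1$ is obtained from $\sigma$ by deleting columns and reindexing, and this reindexing is order-preserving on positions, so $i_b$ lies to the left of $\beta$ in $\sigma$ as well; and $y_{i_b}$, $y_\beta$ are genuine entries of $\sigma$'s second line. Thus the selected term $y_\alpha$ is a term of the second line of $\pi_{c-1}(\sigma)$ that is larger than the given term of $\pi_c(\sigma)$ and appears further to the left in $\sigma$, as required. I would also remark in passing that the case $c=1$ is vacuous (there is no $\pi_0$), and that the hypothesis ensures $c \ge 2$ whenever the statement is invoked.
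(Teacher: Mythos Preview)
Your argument is correct and is precisely the contrapositive the paper states in a single line: if no selected term of $\pi_{c-1}(\sigma)$ to the left of $y_\beta$ were larger, the greedy procedure at stage $c-1$ would have been forced to pick up $y_\beta$. One small expository wrinkle: you first fix $a$ with $i_a < \beta \le i_{a+1}$ (i.e.\ $a$ maximal) and then branch into a subcase ``$i_{a+1} < \beta$'', which is impossible under that setup---either drop the constraint $\beta \le i_{a+1}$ and run the iteration as written, or keep $a$ maximal and conclude in one step that $y_{i_a} > y_\beta$.
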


\begin{proof}
If not, then the term in question would have already been included in
the second line of $\pi_{c-1}(\sigma)$.
\end{proof}

In the combinatorics literature, it is customary to write permutations
in one-line notation, as sequences $(\sigma_1,\dots, \sigma_r)$ with
$\sigma_i = \sigma(i)$ for $i \in [r]$. Such sequences are often
called \emph{words}. Increasing and decreasing subsequences of a
permutation are always interpreted with respect to the one-line
notation (equivalently, the second line of the two-line notation).

\begin{prop}\label{p:1}
The length $k$ of the canonical factorisation of a permutation
$\sigma$ in $\Sym_r$ is always equal to the length of a longest
decreasing subsequence of~$\sigma$.
\end{prop}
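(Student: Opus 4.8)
The plan is to show that the canonical factorisation decomposes $\sigma$ into exactly $k$ increasing subsequences (on the second line), where $k$ is forced to be as small as possible, and then to invoke the classical duality between minimal increasing-subsequence covers and longest decreasing subsequences. First I would observe that each $\pi_c(\sigma)$, read along its second line, is by construction an increasing subsequence of $\sigma$; hence $\pi(\sigma)$ exhibits $\sigma$ as a disjoint union of $k$ increasing subsequences. By the well-known theorem (essentially Dilworth's theorem applied to the permutation poset, or Greene's theorem specialised to the first coordinate), the minimum number of increasing subsequences needed to cover $\sigma$ equals $\LLDS(\sigma)$. This immediately gives $k \ge \LLDS(\sigma)$.

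For the reverse inequality $k \le \LLDS(\sigma)$, the key tool is Lemma~\ref{lem:1}: every term appearing in the second line of $\pi_c(\sigma)$ has a companion term in the second line of $\pi_{c-1}(\sigma)$ that is strictly larger and occurs strictly further to the left in $\sigma$. Starting from any term $t_k$ in the second line of $\pi_k(\sigma)$, I would iterate this lemma to produce terms $t_k, t_{k-1}, \dots, t_1$ with $t_c$ in the second line of $\pi_c(\sigma)$, each $t_{c-1} > t_c$, and each $t_{c-1}$ positioned to the left of $t_c$ in the one-line notation of $\sigma$. The sequence $(t_1, t_2, \dots, t_k)$ is then, by construction, a decreasing subsequence of $\sigma$ of length $k$. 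Therefore $\LLDS(\sigma) \ge k$, and combined with the previous paragraph we get equality.

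The main obstacle — really the only non-formal point — is making sure Lemma~\ref{lem:1} is genuinely applicable at every stage of this descent, i.e. that $\pi_{c-1}(\sigma)$ is nonempty whenever $\pi_c(\sigma)$ is, and that "further to the left in $\sigma$" is the correct reading of the lemma's phrasing (position in the one-line word, equivalently position in the original two-line array, not position within the factor). Both are immediate from the definition of the canonical factorisation: the factors are produced in order $\pi_1, \pi_2, \dots$ by repeatedly excising an initial increasing subsequence, so a later factor can only be nonempty if all earlier ones were, and the reindexing in forming complements does not disturb the relative left-to-right order inherited from $\sigma$. Once these bookkeeping points are dispatched, the two inequalities fit together with no further work. (One could alternatively bypass the Dilworth-type input for the lower bound $k \ge \LLDS(\sigma)$ by noting directly that two entries of a common decreasing subsequence can never land in the same increasing factor, so a decreasing subsequence of length $\LLDS(\sigma)$ meets at least $\LLDS(\sigma)$ distinct factors; I would include whichever argument is shorter in context.)
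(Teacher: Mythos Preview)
Your proposal is correct and matches the paper's argument closely. The construction of a decreasing subsequence of length $k$ by iterating Lemma~\ref{lem:1} is exactly what the paper does, and for the other inequality the paper uses precisely the direct observation you put in parentheses at the end---that two terms of a decreasing subsequence cannot lie in the same increasing factor $\pi_c(\sigma)$---rather than invoking Dilworth or Greene; so your ``alternative'' is in fact the paper's chosen route, and the Dilworth input is unnecessary overhead here.
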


\begin{proof}
From Lemma \ref{lem:1} it follows that there is a decreasing
subsequence with at least one term taken from each of the second lines
of $\pi_1(\sigma), \dots, \pi_k(\sigma)$.  This is necessarily of
maximal length, as no decreasing subsequence can be formed using two
terms from the second line of the same $\pi_c(\sigma)$.
\end{proof}


\begin{lem}\label{lem:rearranging}
  If the terms of any increasing subsequence of a permutation are
  permuted in a non-trivial way, the Coxeter length of the permutation
  strictly increases.
\end{lem}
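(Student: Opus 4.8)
The plan is to reduce the claim to a statement about inversions. Recall that the Coxeter length of a permutation $\sigma \in \Sym_r$ (in one-line notation) equals the number of inversions, i.e.\ the number of pairs $p < q$ with $\sigma(p) > \sigma(q)$. Let the increasing subsequence in question occupy positions $p_1 < p_2 < \cdots < p_m$ of $\sigma$, with values $\sigma(p_1) < \sigma(p_2) < \cdots < \sigma(p_m)$. Suppose we rearrange the values at these positions by a non-identity permutation $\tau$ of $\{1,\dots,m\}$, producing a new permutation $\sigma'$ that agrees with $\sigma$ outside the $p_i$ and has $\sigma'(p_a) = \sigma(p_{\tau(a)})$. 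The goal is to show $\ell(\sigma') > \ell(\sigma)$.

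First I would argue that inversions involving at most one of the positions $p_1,\dots,p_m$ are unchanged. If a pair $\{p, q\}$ involves a position $p \notin \{p_1,\dots,p_m\}$ and a position $p_a$, then the value at $p$ is the same in $\sigma$ and $\sigma'$, while the multiset of values at $\{p_1,\dots,p_m\}$ is unchanged; but this is not quite enough, since which value sits at $p_a$ has changed. The cleaner observation is: for a fixed outside position $p$ with value $v = \sigma(p)$, the number of $p_a$ with $p_a$ on a given side of $p$ and $\sigma'(p_a)$ on a given side of $v$ — summed appropriately — depends only on the \emph{sets} $\{p_1,\dots,p_m\}$ and $\{\sigma(p_1),\dots,\sigma(p_m)\}$ and on $p, v$, not on the bijection between them. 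Concretely, the count of inversions between $p$ and the block $\{p_1,\dots,p_m\}$ is $\#\{a : p_a < p\} \cdot \#\{a : \sigma(p_a) > v\} + \#\{a : p_a > p\}\cdot \#\{a : \sigma(p_a) < v\}$ (using that all values are distinct, so $v$ is not among them), which is manifestly independent of the rearrangement. Hence all cross-inversions are preserved.

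Second, I would handle the inversions internal to the block, i.e.\ pairs $\{p_a, p_b\}$ with $a < b$. In $\sigma$ there are none, since the subsequence is increasing: positions $p_a < p_b$ carry values $\sigma(p_a) < \sigma(p_b)$. In $\sigma'$, the pair $(p_a, p_b)$ with $a < b$ is an inversion exactly when $\tau(a) > \tau(b)$, i.e.\ exactly when $(a,b)$ is an inversion of $\tau$. So the number of internal inversions of $\sigma'$ equals the number of inversions of $\tau$, which is $\ell(\tau) \ge 1$ since $\tau \ne \mathrm{id}$. Combining the two steps gives $\ell(\sigma') = \ell(\sigma) + \ell(\tau) > \ell(\sigma)$, which is the assertion (and in fact a sharper quantitative version of it).

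The main obstacle, such as it is, is purely bookkeeping: making the cross-inversion count genuinely manifestly independent of $\tau$ rather than merely asserting it. I expect the slickest route is the identity in the previous paragraph, valid precisely because the entries of $\sigma$ are distinct, so that an outside value $v$ never coincides with a block value and the comparison with each block value is strict and unambiguous; one then just notes that both factors in each term count elements of fixed sets. Everything else is immediate from the inversion formula for Coxeter length.
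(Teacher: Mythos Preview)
Your overall strategy---count inversions and split into internal, external, and cross contributions---is exactly the idea the paper has in mind (it too reduces to the inversion count and leaves the details to the reader). The internal-block step is correct: the block contributes $0$ inversions before and $\ell(\tau)\ge 1$ after. The problem is the cross step.

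Your displayed formula for the number of cross-inversions between an outside position $p$ (value $v$) and the block,
\[
\#\{a : p_a < p\}\cdot \#\{a : \sigma(p_a) > v\}\;+\;\#\{a : p_a > p\}\cdot \#\{a : \sigma(p_a) < v\},
\]
is not the right count: the true number is $\#\{a : p_a<p,\ u_a>v\}+\#\{a : p_a>p,\ u_a<v\}$, and this is \emph{not} a product of independent cardinalities. In particular, cross-inversions are \emph{not} invariant under rearranging the block, and your sharp identity $\ell(\sigma')=\ell(\sigma)+\ell(\tau)$ is false. Take $\sigma=(1,2,3)$ with the increasing subsequence at positions $1,3$ (values $1,3$), and swap to get $\sigma'=(3,2,1)$. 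The single outside position is $p=2$ with $v=2$; the cross-inversion count goes from $0$ to $2$, and $\ell(\sigma')=3$ while $\ell(\sigma)+\ell(\tau)=0+1=1$.

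A correct way to finish along your lines: show that un-doing a single block inversion (swapping values $u<u'$ sitting at block positions $p<p'$ with $u'$ at $p$ and $u$ at $p'$) always strictly \emph{decreases} the total length; indeed, the pair $(p,p')$ loses its inversion, positions $q$ with $p<q<p'$ and $u<\sigma(q)<u'$ each lose two inversions, and all other pairs are unchanged. Then bubble-sort the block in $\sigma'$ back to $\sigma$ by such swaps; since $\tau\ne\mathrm{id}$ at least one swap is needed, so $\ell(\sigma')>\ell(\sigma)$.
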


\begin{proof}
This follows from the fact that the Coxeter length of a permutation is
equal to the number of its inversions.  Since any increasing
subsequence is contained in a maximal one and a permutation of the
original sequence is also a permutation of any maximal one in which it
is contained, it suffices to prove the result for maximal increasing
subsequences. So from now on we assume that our increasing subsequence
is maximal.

Next, we observe that it suffices to prove the result for a swap of
consecutive terms of the given subsequence, as any permutation of the
subsequence can be achieved by a succession of such swaps.

Recall that an inversion in a permutation $\sigma$ is a pair
$(\sigma_i,\sigma_j)$ such that $i<j$ and $\sigma_i>\sigma_j$.
Furthermore, recall \cite{Knuth}*{\S5.1.1} that the \emph{inversion
table} of a permutation $\sigma \in \Sym_r$ is the sequence $(b_1,
\dots, b_r)$ such that $b_j$ is the number of $\sigma_i$ to the left
of $j$ which exceed $j$. In other words, $b_j$ is the number of
inversions with second term $j$. Then $\ell(\sigma) = b_1+\cdots +
b_r$, where $\ell(\sigma)$ is the Coxeter length of $\sigma$.

Finally, consider two consecutive terms $\sigma_i < \sigma_j$ (where
$i<j$) of our maximal increasing subsequence. Maximality means that
every intermediate term in $(\sigma_{i+1}, \dots, \sigma_{j-1})$ lies
outside of the closed interval $[\sigma_i,\sigma_j]$. Swapping
$\sigma_i$ and $\sigma_j$ increases the inversion number
$b_{\sigma_i}$ by $1+N$ and decreases the inversion number
$b_{\sigma_j}$ by $N$, where $N$ is the number of intermediate terms
exceeding $\sigma_i$. All other inversion numbers in the inversion
table remain unchanged, so the net effect is an increase of $1$ in the
total number of inversions. Thus, $\ell(w)$ is incremented by $1$, and
the proof is complete.
\end{proof}
 
\begin{example}
The following examples illustrate the proof of
Lemma~\ref{lem:rearranging}. Take $\sigma = (539746182)$ in $\Sym_9$.
Its inversion table is $b = (6,7,1,3,0,2,1,1,0)$ so $\ell(\sigma) =
21$.

(i) Consider the maximal increasing subsequence $(3468)$ in $\sigma$.
Swapping its first two terms produces the permutation $\sigma' =
(549736182)$, with inversion table $b' = (6,7,4,1,0,2,1,1,0)$. Notice
that $b_3$ increased by $3$ and $b_4$ decreased by $2$, so
$\ell(\sigma') = 22$.

(ii) Swapping the last two terms of $(3468)$ gives $\sigma'' =
(539748162)$. Its inversion table is $b'' = (6,7,1,3,0,3,1,1,0)$. This
time $b_6$ increased by $1$ and all other terms of $b$ remained the
same, so again $\ell(\sigma'') = 22$.
\end{example}

The \emph{weight} of a sequence $\ui = (i_1, i_2, \dots, i_r)$ in
$[k]^r$ is the composition $\mu = (\mu_1, \dots, \mu_k)$, where
$\mu_c$ counts the number of terms in $\ui$ equal to $c$, for each $c$
in $[k]$. It is clear by construction that each of the sequences
$\ui$, $\uj$ such that $\varphi'(\sigma) = (\ui,\uj)$ is of the same
weight $\mu$.

The following result gives the main properties of the map $\varphi'$.

\begin{prop}\label{p:2}
The map $\varphi': \Sym_r \to [r]^r \times [r]^r$ is injective.  Given any
$\sigma$ in $\Sym_r$, let $\varphi'(\sigma) = (\ui,\uj)$. Then $\sigma$ is
the unique permutation in $\Sym_r$ of minimal length such that $\ui
\sigma = \uj$.
\end{prop}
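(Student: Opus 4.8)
The plan is to break the statement into its two assertions---injectivity of $\varphi'$, and the minimal-length uniqueness property---and to handle them in that order, since the second assertion is really where the content lies.

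For injectivity, the idea is to show that $\varphi'(\sigma)$ retains enough information to reconstruct $\pi(\sigma)$, hence $\sigma$ itself. First I would observe that from $(\ui,\uj)$ alone we recover the common weight $\mu = (\mu_1, \dots, \mu_k)$ and the colour $c$ attached to each position in the first and second line. Reading off which positions in the first line receive colour $c$ tells us exactly which of the numbers $1, 2, \dots, r$ appear in the first line of $\pi_c(\sigma)$, and since the first line of each $\pi_c$ is always written in increasing order, this determines the first line of each block $\pi_c(\sigma)$ completely. Similarly $\uj$ tells us which numbers appear in the second line of $\pi_c(\sigma)$; the subtle point is that within a block the second line need not be increasing, so we must argue that the \emph{position} of each value is forced. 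Here I would use the construction of the initial increasing subsequence (Definition~\ref{d:initial}): the value $\sigma(\alpha)$ sitting in column $\alpha$ of $\pi_c(\sigma)$ is precisely the one whose first-line entry is $\alpha$, so pairing up the colour-$c$ first-line positions with the colour-$c$ second-line values in the order dictated by $\ui$ and $\uj$ recovers the columns of $\pi_c(\sigma)$, and then concatenating recovers $\sigma$. Thus $\varphi'$ is injective.

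For the minimal-length statement, I first need that $\sigma$ itself satisfies $\ui\sigma = \uj$: this is immediate from the definition of the colouring, since colour is an invariant of a column of $\pi(\sigma)$ and $\sigma$ acts by sending the first-line position to the second-line position of the same column. Now suppose $\tau \in \Sym_r$ also satisfies $\ui\tau = \uj$. The condition $\ui\tau = \uj$ says exactly that $\tau$ maps, for each colour $c$, the set of first-line colour-$c$ positions bijectively onto the set of second-line colour-$c$ positions; equivalently, $\tau$ agrees with $\sigma$ \emph{up to permuting, within each colour class, the columns of $\pi(\sigma)$}. More precisely, $\tau = \rho\sigma$ (or $\sigma\rho'$) where $\rho$ permutes positions within colour classes. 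The key geometric fact is that the second-line colour-$c$ positions in $\sigma$, read left to right, form an \emph{increasing} subsequence of $\sigma$: this is exactly Proposition~\ref{p:1}'s companion observation that no two terms from the second line of a single $\pi_c(\sigma)$ can form a descent (if they did, the later one would have been excluded from the initial increasing subsequence that built $\pi_c$). So any nontrivial $\tau \neq \sigma$ of this form rearranges an increasing subsequence of $\sigma$ nontrivially, and by Lemma~\ref{lem:rearranging} its Coxeter length is strictly larger than that of $\sigma$. Hence $\sigma$ is the unique minimal-length solution.

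The main obstacle I anticipate is the bookkeeping in the reconstruction step for injectivity---precisely, making rigorous the claim that the within-block second-line order is recoverable from $(\ui,\uj)$, which amounts to checking that the colouring does not collapse distinct permutations. The cleanest route is probably to prove by induction on $k$ (the length of the canonical factorisation, equivalently $\LLDS(\sigma)$ by Proposition~\ref{p:1}) that $\pi_1(\sigma)$ is determined by the colour-$1$ entries of $(\ui,\uj)$ together with the requirement that it be an initial increasing subsequence, then pass to the complement $(x'/y')$ and apply the inductive hypothesis. Everything else---the action computation $\ui\sigma = \uj$, and the appeal to Lemma~\ref{lem:rearranging}---is routine once this is in place.
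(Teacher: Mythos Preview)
Your proposal is correct and follows the same two-step structure as the paper's proof: build a left inverse to $\varphi'$ from the colour data, then use Lemma~\ref{lem:rearranging} on the within-block rearrangements to handle minimality. The second half matches the paper essentially verbatim.

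The one place you make life harder than necessary is the injectivity step. You flag as ``the subtle point'' that ``within a block the second line need not be increasing''---but in fact it \emph{is} increasing: the second line of $\pi_c(\sigma)$ is by construction the initial increasing subsequence of what remains after excising $\pi_1,\dots,\pi_{c-1}$. So the reconstruction is immediate: for each colour $c$, list the positions $\{\alpha : i_\alpha = c\}$ in increasing order to get the first line of $\pi_c(\sigma)$, and list $\{\alpha : j_\alpha = c\}$ in increasing order to get the second line. Pairing these columnwise recovers $\pi_c(\sigma)$ exactly, and concatenating over $c$ recovers $\sigma$. This is precisely what the paper does, and it removes the need for the induction on $k$ you anticipate in your final paragraph. (You already use the fact that each block is increasing in your minimality argument, so the observation is available to you.)
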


\begin{proof}
To prove the injectivity, let $(\ui,\uj)$ be a pair of multi-indices
in $[r]^r \times [r]^r$ of the same weight $\mu = (\mu_1,\dots,
\mu_k)$.  For each $c$ in $[k]$, let $p(c) = (x(c)/(y(c))$ be the pair
of increasing sequences defined by letting $x(c)$ (resp., $y(c)$) be
the positions in the sequence $\ui$ (resp., $\uj$) having value $c$,
in order. Then the configuration $(p(1) \mid p(2) \mid \cdots \mid
p(k))$ defines, up to column reordering, the original permutation
$\sigma$ that produced the pair $(\ui,\uj)$. Sending $(\ui,\uj)
\mapsto \sigma$ defines a left inverse to $\varphi'$, and thus
$\varphi'$ is injective.  For instance, if
\[
(\ui,\uj) =  ((1,2,3,1,2,1,1,2,3), (3,2,2,1,3,2,1,1,1))
\]
then we have
\[
(p(1)\mid p(2) \mid p(3)) =
\left(\begin{array}{cccc|ccc|cc}1 & 4 & 6 & 7 & 2 & 5 & 8 & 3 & 9 \\
 4 & 7 & 8 & 9 & 2 & 3 & 6 & 1 & 5\end{array}\right)
\]
which recovers the permutation $\sigma$ of Example~\ref{e:1}.

Now we prove the second claim. Suppose that $\ui \sigma' = \uj$ for
some $\sigma' \ne \sigma$ in $\Sym_r$. Then $\sigma'$ is obtainable
from $\sigma$ by permuting the first (or, equivalently, second) line
of one or more parts of its canonical factorisation. If more than one
part is involved, they can be permuted independently, so it suffices
to consider such a permutation of a single part of the canonical
factorisation. This amounts to a reordering of the terms of an
increasing subsequence of $\sigma$, which by
Lemma~\ref{lem:rearranging} necessarily increases the Coxeter length.
\end{proof}

Now we can define the map $\varphi$. If $n \ge r$, we have $\B =
\Sym_r$ since $\Phi$ is faithful.  If $n < r$, $\B = \{\sigma \in
\Sym_r \mid \LLDS(\sigma) \le n\}$ is a proper subset of $\Sym_r$, and
we define
\[
\varphi = \text{ the restriction of $\varphi'$ to $\B$} .
\]
If $n \ge r$ then we define $\varphi = \varphi'$. The properties of
Proposition \ref{p:2} are invariant under restriction, thus are
applicable to $\varphi$ in all cases.

\begin{rmk}\label{rmk:1}
In the stable case $n \ge r$, we can do better than the unitriangular
subsystem provided by $\varphi = \varphi'$. Namely, for each $\sigma$,
simply set $\ui=(1,2,\dots,r)$ and $\uj=\ui \sigma$. Since $n \ge r$,
the values in $\uj$ are all distinct, so $\sigma$ is the unique
element of $\Sym_r$ such that $\ui\sigma = \uj$. Sending $\sigma$
to $(\ui,\uj)$ instead of using $\varphi = \varphi'$ picks out a
subsystem of \eqref{e:4} having a diagonal coefficient matrix $A$. In
fact, as it is a $(0,1)$-matrix, $A = I$ is the identity matrix.
\end{rmk}

\begin{example}\label{ex:r3}
(i) Suppose that $r=3$. The pairs $(\ui,\uj)$ corresponding to each
  $\sigma$ in $\Sym_3$ under $\varphi'$ are tabulated below:
\[
\begin{array}{c|cccccc}
\sigma & (123)&(213)&(132)&(231)&(312)&(321)\\ \hline
\ui &    (1,1,1)&(1,2,1)&(1,1,2)&(1,1,2)&(1,2,2)&(1,2,3)\\
\uj &    (1,1,1)&(2,1,1)&(1,2,1)&(2,1,1)&(2,2,1)&(3,2,1)
\end{array}
\]
Order the elements of $\Sym_3$ as indicated in the table; this is a
linear order compatible with Coxeter length.  The map $\varphi'$
picks out the subsystem of \eqref{e:4} given by
\begin{eqnarray*}
  c(123)+c(213)+\cdots+c(321) &=& X_{(1,1,1),(1,1,1)} \\
  c(213)+c(312) &=& X_{(1,2,1),(2,1,1)} \\
  c(132)+c(312) &=& X_{(1,1,2),(1,2,1)} \\
  c(231)+c(321) &=& X_{(1,1,2),(2,1,1)} \\
  c(312)+c(321) &=& X_{(1,2,2),(2,2,1)} \\
  c(321) &=&  X_{(1,2,3),(3,2,1)}
\end{eqnarray*}
with coefficient matrix $A$ of the form
\[
A =
\begin{bmatrix}
  1&1&1&1&1&1\\
  &1&\cdot&\cdot&1&\cdot\\
  &&1&\cdot&1&\cdot\\
  &&&1&\cdot&1\\
  &&&&1&1\\
  &&&&&1
\end{bmatrix}.
\]
By inverting the matrix $A$, we conclude that, for any $X$ in
$\End_{S(n,3)}(\V^{\otimes r})$, with $n \ge 3$, we have
\begin{align*}
  c(123) &= X_{(1,1,1),(1,1,1)} - X_{(1,2,1),(2,1,1)} - X_{(1,1,2),(1,2,1)}\\
  &\phantom{=} - X_{(1,1,2),(2,1,1)} + X_{(1,2,2),(2,2,1)} - X_{(1,2,3),(3,2,1)} \\
  c(213) &= X_{(1,2,1),(2,1,1)} - X_{(1,2,2),(2,2,1)} + X_{(1,2,3),(3,2,1)} \\
  c(132) &= X_{(1,1,2),(1,2,1)} - X_{(1,2,2),(2,2,1)} + X_{(1,2,3),(3,2,1)} \\
  c(231) &= X_{(1,1,2),(2,1,1)} - X_{(1,2,3),(3,2,1)} \\
  c(312) &= X_{(1,2,2),(2,2,1)} - X_{(1,2,3),(3,2,1)} \\
  c(321) &= X_{(1,2,3),(3,2,1)} .
\end{align*}

(ii) If $(n,r) = (2,3)$ then restricting the above to $\varphi$
picks out the subsystem of \eqref{e:4} given by
\begin{eqnarray*}
  c(123)+c(213)+\cdots+c(312) &=& X_{(1,1,1),(1,1,1)} \\
  c(213)+c(312) &=& X_{(1,2,1),(2,1,1)} \\
  c(132)+c(312) &=& X_{(1,1,2),(1,2,1)} \\
  c(231) &=& X_{(1,1,2),(2,1,1)} \\
  c(312) &=& X_{(1,2,2),(2,2,1)} 
\end{eqnarray*}
with coefficient matrix $A$ of the form
\[
A =
\begin{bmatrix}
  1&1&1&1&1\\
  &1&\cdot&\cdot&1\\
  &&1&\cdot&1\\
  &&&1&\cdot\\
  &&&&1
\end{bmatrix}.
\]
Notice that $A$ is a submatrix of the matrix in part (i).  By
inverting $A$, we conclude that, for any $X$ in
$\End_{S(2,3)}(\V^{\otimes r})$,
\begin{align*}
  c(123) &= X_{(1,1,1),(1,1,1)} - X_{(1,2,1),(2,1,1)} - X_{(1,1,2),(1,2,1)}\\
  &\phantom{=} - X_{(1,1,2),(2,1,1)} + X_{(1,2,2),(2,2,1)}  \\
  c(213) &= X_{(1,2,1),(2,1,1)} - X_{(1,2,2),(2,2,1)}  \\
  c(132) &= X_{(1,1,2),(1,2,1)} - X_{(1,2,2),(2,2,1)}  \\
  c(231) &= X_{(1,1,2),(2,1,1)}  \\
  c(312) &= X_{(1,2,2),(2,2,1)} .
\end{align*}
\end{example}

\begin{rmk}
In general, if $n<r$ then we can always obtain a solution from one in
the stable case $n \ge r$ (we may as well take $n=r$) by simply
putting $X_{\ui\sigma, \uj\sigma} = 0$ for all $\sigma$ not in $\B =
\B_{n,r}$. This is a consequence of the fact that $\varphi$ is defined
to be the restriction of $\varphi'$.  Example~\ref{ex:r3} illustrates
this phenomenon.
\end{rmk}

\section{\bf The map $\psi$}\label{s:4}\noindent
We continue to assume that $\Bbbk$ is a unital commutative ring. Now
we consider the other symmetric group $W_n$. We write $\C = \C_{n,r}$,
$A = A_\Psi$ in this section. Our goal is to define the combinatorial
map $\psi$ and show that it has the properties needed to prove
Theorem~\ref{t:3}(ii).

The defining condition in the set $\C = \{w \in W_n \mid \LLIS(w) \ge
n-r \}$ is vacuous if $n-r \le 1$, equivalently, if $r \ge n-1$.  The
condition $r \ge n-1$ determines the stable case in this situation.
Note that (by e.g., \cites{BDM:kernel,BDM}) the representation $\Psi:
\Bbbk[W_n] \to \End_{\Ptn_r(n)}(\V^{\otimes r})$ is faithful if and
only if $r \ge n-1$, in which case $\C = W_n$.

\begin{defn}
Unlike $\varphi$, which depended only on $r$, the map $\psi$
always depends on both $n$, $r$.
\begin{enumerate}
\item Suppose that $r \le n-1$. If $w = (w_1, w_2, \dots, w_n)$ is in
  $\C$, where $w_j = w(j)$ for all $j$ in $[n]$, we choose (any way we
  like) an increasing subsequence of $w$ of length $n-r$, and let
  \[
  (w_{j_1}, w_{j_2}, \dots, w_{j_r}), \quad j_1 < j_2 < \cdots < j_r
  \]
  be its \emph{complement} in $w$.  Set $i_\alpha = w_{j_\alpha} =
  w(j_\alpha)$ for each $\alpha = 1, \dots, r$. If $w$ is written in
  two-line notation then $(i_1,\dots,i_r)$ is the complement of the
  chosen increasing subsequence in the second line and $(j_1,\dots,
  j_r)$ is the corresponding subsequence in the first line. We then
  define $\psi(w) = (\ui,\uj)$ where $\ui = (i_1,\dots,i_r)$ and $\uj
  = (j_1,\dots, j_r)$.

\item Notice that $\C = W_n$ if $r=n-1$, and the map $\psi$ sends
  $W_n$ into $[n]^{n-1} \times [n]^{n-1}$. If $r>n-1$ we extend this
  to a map $W_n \to [n]^{r} \times [n]^{r}$ by appending an additional
  $r-(n-1)$ values of $1$, $w_1=w(1)$ to the end of each $\uj$, $\ui$
  respectively.  The choice of $1$ here is purely arbitrary, and it
  can be replaced by any other number in $[n]$.
\end{enumerate}
\end{defn}

\begin{example}
Suppose that $w = (421738965)$ in $W_9$. Then we have $\LLIS(w) =
4$. Since the condition $\LLIS(w) \ge n-r$ is equivalent to $r \ge n -
\LLIS(w)$, we may take any $r \ge 5$, up to and including $r=8$.
\begin{enumerate}\renewcommand{\labelenumi}{(\roman{enumi})}
\item The choice $r=5$ with the increasing subsequence $(2389)$
  yields the pair $(\ui,\uj) = ((4,1,7,6,5),(1,3,4,8,9))$.

\item The choice $r=6$ with the increasing subsequence $(238)$
  yields the pair $(\ui,\uj) = ((4,1,7,9,6,5),(1,3,4,7,8,9))$.
\end{enumerate}
The pair $(\ui,\uj)$ clearly depends on the chosen increasing
subsequence of $w$. Notice that $\uj$ is necessarily increasing, and
that is always the case.
\end{example}

The following result completes the proof of Theorem~\ref{t:3}.

\begin{prop}\label{p:3}
The map $\psi: \C \to [n]^r \times [n]^r$ is injective. Given any $w$
in $W_n$, let $\psi(w) = (\ui,\uj)$. Then $w$ is the unique permutation
in $W_n$ of minimal length such that $w \uj  = \ui$.
\end{prop}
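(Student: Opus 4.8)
The plan is to mirror the structure of the proof of Proposition~\ref{p:2}, producing an explicit left inverse for $\psi$ to establish injectivity, and then analysing the inversion count to get the minimal-length claim. First I would observe that, given $\psi(w) = (\ui,\uj)$, the sequence $\uj = (j_1,\dots,j_r)$ (or its truncation to the first $n-1$ entries, in the stable case) is strictly increasing, while the complementary positions $\{j_1^c,\dots,j_{n-r}^c\}$ in $[n]$ carry the chosen increasing subsequence of $w$. Since a permutation $w \in W_n$ is determined by where it sends each element of $[n]$, the pair $(\ui,\uj)$ determines $w$ as soon as we know $w$ on the complementary positions. The key point is that the values appearing in $\ui$ together with the values $w(j_1^c) < \cdots < w(j_{n-r}^c)$ partition $[n]$, and the latter are forced: they are the elements of $[n] \setminus \{i_1,\dots,i_r\}$ listed in increasing order (this uses that the omitted subsequence was chosen to be increasing). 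Hence $w$ is recoverable from $(\ui,\uj)$, and sending $(\ui,\uj)\mapsto w$ is a well-defined left inverse to $\psi$; this gives injectivity. One should be a little careful in the non-stable regime $r > n-1$: there the last $r-(n-1)$ entries of $\ui$, $\uj$ are the padding value $1$, which is recognisable since $\uj$ restricted to its first $n-1$ entries is strictly increasing and so cannot legitimately repeat the value~$1$; after stripping the padding we are reduced to the $r = n-1$ case.

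Next I would prove the minimality statement. Suppose $w' \in W_n$ also satisfies $w'\uj = \ui$. Writing $w$ in one-line notation, $\ui\to\uj$ records that $w$ sends each $j_\alpha$ to $i_\alpha$; the condition $w'\uj = \ui$ says exactly that $w'$ agrees with $w$ on the set $\{j_1,\dots,j_r\}$ (the non-complementary positions). Thus $w'$ differs from $w$ only by a permutation of the values that $w$ assigns to the complementary positions $j_1^c < \cdots < j_{n-r}^c$ — equivalently, $w'$ is obtained from $w$ by reordering the terms of the chosen increasing subsequence of $w$. If this reordering is non-trivial, then by Lemma~\ref{lem:rearranging} (which says that non-trivially permuting the terms of an increasing subsequence strictly increases the number of inversions, hence the Coxeter length) we get $\ell(w') > \ell(w)$. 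Therefore $w$ is the unique element of minimal length with $w\uj = \ui$, and since $w \in \C$ by hypothesis it is in particular the unique such element of $\C$ — though note the statement as phrased asserts minimality within all of $W_n$, which is what we have just shown.

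The main obstacle, I expect, is not any single hard step but rather bookkeeping across the two regimes and making sure Lemma~\ref{lem:rearranging} applies cleanly: I must confirm that the complementary positions together with the values $w$ places there genuinely form an increasing subsequence of $w$ in the one-line sense (they do, by construction of $\psi$), so that the lemma is invoked with the correct hypothesis. A secondary subtlety is that, unlike $\varphi$, the map $\psi$ involves arbitrary choices (which increasing subsequence of length $n-r$ to excise, and the padding value), so I should emphasise that injectivity and minimality hold for any fixed such choice, not that $\psi$ is canonical. Once these points are pinned down, the proof is a short argument modelled line-for-line on Proposition~\ref{p:2}.
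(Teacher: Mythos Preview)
Your proposal is correct and matches the paper's approach: recover $w$ from $(\ui,\uj)$ by filling the complementary positions with the missing values of $[n]$ in increasing order, then invoke the inversion-count argument (essentially Lemma~\ref{lem:rearranging}, which the paper uses implicitly) for minimality. Two minor slips worth fixing: the regime $r > n-1$ is the \emph{stable} case in the paper's terminology, not the non-stable one, and the padding appended to $\ui$ is $w(1)$ rather than~$1$---though neither point affects the substance of your argument.
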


\begin{proof}
To prove the injectivity claim, we need to show that for any given
pair $(\ui,\uj)$ in the image of $\psi$, there is precisely one $w \in
\C$ for which $\psi(w) = (\ui,\uj)$.  In the stable case $r \ge n-1$,
this is clear because any permutation of $n$ items is determined once
its values on any $n-1$ of them is specified.  If $r < n-1$ this still
holds, because the given pair $(\ui,\uj)$ determines $w$ on the
numbers in $\uj$, and $w$ is determined on all other numbers in $[n]$
by the fact that it must be increasing on those numbers. Hence $\psi$
is injective.

Now consider the second claim. It is vacuous in the stable range $r
\ge n-1$, in which case there is only one $w$ such that $w \uj = \ui$.
If $ r < n-1$, let $w' \ne w$ be any permutation in $W_n$ such that
$w' \uj = \ui$.  Then $w'$, $w$ necessarily agree on $\{j_1, j_2,
\cdots, j_r\}$. This means that $w'$ may be obtained from $w$ by
permuting its chosen increasing subsequence of length $n-r$, indexed
by the complement $[n] \setminus \{j_1, \dots, j_r\}$. Such a
permutation necessarily increases the total number of inversions, by
Lemma~\ref{lem:rearranging}, and thus the Coxeter length.
\end{proof}

\begin{example}
(i) Take $(n,r)=(3,2)$. If we always choose the first term of $w$ as
  the increasing subsequence $s$ of length $n-r=1$, the pairs
  $(\ui,\uj)$ such that $w \uj = \ui$ corresponding to each $w$ in
  $\C=W_3$ under $\psi$ are:
\[
\begin{array}{c|cccccc}
  w & (123)&(213)&(132)&(231)&(312)&(321)\\ \hline
  s & (1)&(2)&(1)&(2)&(3)&(3) \\
  \ui & (2,3)& (1,3) & (3,2) & (3,1) & (1,2) & (2,1) \\
  \uj & (2,3)& (2,3) & (2,3) & (2,3) & (2,3) & (2,3)
\end{array}
\]
Order the elements of $W_3$ in the reading order across the table.
This is a linear order compatible with Coxeter length.  Then the
matrix $A$ produced by $\varphi$ is just $A = I$, the identity matrix.

(ii) Take $(n,r)=(3,1)$. We choose $s$ to be the first increasing
subsequence of $w$ of length $n-r=2$. Then the pairs $(\ui,\uj)$ such
that $w \uj = \ui$ corresponding to each $w$ in $\C$ under $\psi$ are:
\[
\begin{array}{c|ccccc}
  w & (123)&(213)&(132)&(231)&(312)\\ \hline
  s & (12) & (23) & (13) & (23) & (12) \\
\ui &  3   &   1   &   2   &   1   &   3  \\
\uj &  3   &   2   &   3   &   3   &   1  
\end{array}
\]
We again order the elements of $\C$ by the reading order across the
table.  The subsystem of \eqref{e:4} picked out by $\psi$ is
\begin{eqnarray*}
  c(123)+c(213) &=& X_{3,3} \\
  c(213)+c(312) &=& X_{1,2} \\
  c(132)+c(312) &=& X_{2,3} \\
  c(231) &=& X_{1,3} \\
  c(312) &=& X_{3,1}
\end{eqnarray*}
with coefficient matrix 
\[
A = 
\begin{bmatrix}
  1&1&\cdot&\cdot&\cdot\\
  &1&\cdot&\cdot&1\\
  &&1&\cdot&1\\
  &&&1&\cdot\\
  &&&&1
\end{bmatrix} .
\]
Inverting $A$ yields the equations
\begin{alignat*}{3}
  c(123) &= X_{3,3} - X_{1,2} + X_{3,1} \\
  c(213) &= X_{1,2} - X_{3,1} \quad
 &\qquad c(132) &= X_{3,3}- X_{3,1} \\
  c(231) &= X_{1,3} \quad
 & c(312) &= X_{3,1} .
\end{alignat*}
\end{example}

\section{\bf Further remarks}\label{s:5}\noindent
We continue to assume that $\Bbbk$ is a commutative ring with $1$. Fix
$n$ and $r$ and let $\Phi = \Phi_{n,r}$ and $\Psi = \Psi_{n,r}$.

Let $\tau \in \Sym_r$ and $y \in W_n$. If we take $X = \Phi(\tau)$ or
$\Psi(y)$ in the algebra $\mathbf{B}(n,r) = \Bbbk[\Sym_r]^\opp/\ker(\Phi)$
or $\mathbf{C}(n,r) = \Bbbk[W_n]/\ker(\Psi)$ then we have an algorithm
that computes the coefficents $c(\sigma)$ for all $\sigma \in
\B_{n,r}$ or $c(w)$ for all $w \in \C_{n,r}$ in equation
\eqref{e:problem}. By running the algorithm with the indicated value
of $X$ we obtain a relation of the form
\begin{equation}\label{e:normal-form}
  \bar{\tau} = \sum_{\sigma \in \B_{n,r}} c(\sigma)\bar{\sigma}
  \quad \text{or} \quad
  \bar{y} = \sum_{w \in \C_{n,r}} c(w)\bar{w}.
\end{equation}
We call such a linear combination the \emph{normal form} of
$\bar{\tau}$ or $\bar{y}$ with respect to the basis $\bar{\B}_{n,r}$
or $\bar{\C}_{n,r}$, respectively. Obviously, the above relation is
non-trivial if and only if $\bar{\tau}$ is not in $\bar{\B}_{n,r}$ or
$\bar{y}$ is not in $\bar{\C}_{n,r}$. The following is clear.

\begin{prop}
The algebra $\mathbf{B}(n,r)$ is isomorphic to the algebra generated
by the $\bar{\sigma}$ for $\sigma \in \B_{n,r}$ subject to the
relations given by multiplication in $\Sym_r$ along with the normal
form equations in \eqref{e:normal-form} imposed on all products of
elements of $\B_{n,r}$.  The algebra $\mathbf{C}(n,r)$ is isomorphic
to the algebra generated by the $\bar{w}$ for $w \in \C_{n,r}$ subject
to the relations given by multiplication in $W_n$ along with the
normal form equations in \eqref{e:normal-form} imposed on all products
of elements of $\C_{n,r}$.
\end{prop}

These presentations are in general far from being minimal. They are of
course equivalent to the presentations obtained by imposing the group
multiplication on the ambient symmetric group along with any set of
relations generating $\ker(\Phi)$ or $\ker(\Psi)$.

\begin{example}
In these examples, we express normal form equations as congruences
modulo the appropriate kernel in the corresponding quotient of the
group algebra.

(i) For $\mathbf{B}(1,r)$ or $\mathbf{C}(n,0)$ the normal form of
every permutation is just the identity. That is, every permutation is
congruent to the identity.

(ii) For $\mathbf{B}(r-1,r)$ or $\mathbf{C}(n,n-2)$ the normal form of
the permutation of longest Coxeter length is given by the alternating
sum of all the the other permutations. For instance, in
$\mathbf{C}(n,n-2)$ we always have the normal form congruence:
$w_0 \equiv \sum_{w\ne w_0} (-1)^{\ell(w)+1} w$. All other
normal forms are trivial. The analogous formula holds in the other
case. This is clear, as $\sum_w (-1)^{\ell(w)} w$ generates the
kernel of $\Psi_{n,n-2}$ and similarly in the other case.

(iii) This leaves $\mathbf{B}(2,4)$ as the sole remaining case of
interest when $r=4$.  See Example~\ref{ex:bases} for a list of the
elements of $\B_{2,4}$.  In that case we have the following
non-trivial normal form congruences, where as usual permutations are
written in one-line notation:
\begin{align*}
  (4321) &\equiv -(1234)+(1243)+2 (1324)-(1342)-(1423)+(2134)\\
         &  -(2143)-(2314)+(2341)-(3124)+(3412)+(4123).\\
  (4312) &\equiv -(1234)+(1243)+(1324)-(1423)+(2134)-(2143)\\
         &  -(3124)+(3412)+(4123).\\
  (4231) &\equiv (1243)+(1324)-(1342)-(1423)+(2134)-2 (2143)\\
         & -(2314)+(2341)+(2413)-(3124)+(3142)+(4123).\\
  (3421) &\equiv -(1234)+(1243)+(1324)-(1342)+(2134)-(2143)\\
         & -(2314)+(2341)+(3412).\\
(4213) &\equiv (1243)-(1423)-(2143)+(2413)+(4123).\\
(4132) &\equiv (2134)-(2143)-(3124)+(3142)+(4123).\\
(3241) &\equiv (1243)-(1342)-(2143)+(2341)+(3142).\\
(2431) &\equiv (2134)-(2143)-(2314)+(2341)+(2413).\\
(3214) &\equiv (1234)-(1324)-(2134)+(2314)+(3124).\\
(1432) &\equiv (1234)-(1243)-(1324)+(1342)+(1423).
\end{align*}

(iv) In case $n=4$, the sole interesting case is $\mathbf{C}(4,1)$. In
that case the non-trivial normal form congruences are as follows:
\begin{align*}
(4321) &\equiv (1234)-(1243)-(2134)+(2341)+(4123).\\
(4312) &\equiv (1234)-(1243)-(1324)+(1342)-(2134)+(2314)+(4123).\\
(4231) &\equiv 2 (1234)-(1243)-(1324)-(2134)+(2341)+(4123).\\
(3421) &\equiv (1234)-(1243)-(1324)+(1423)-(2134)+(2341)+(3124).\\
(4213) &\equiv (1234)-(1324)-(2134)+(2314)+(4123).\\
(4132) &\equiv (1234)-(1243)-(1324)+(1342)+(4123).\\
(3412) &\equiv (1234)-(1243)-2 (1324)+(1342)+(1423)\\&-(2134)+(2314)+(3124).\\
(3241) &\equiv (1234)-(1324)-(2134)+(2341)+(3124).\\
(2431) &\equiv (1234)-(1243)-(1324)+(1423)+(2341).\\
(3214) &\equiv (1234)-(1324)-(2134)+(2314)+(3124).\\
(3142) &\equiv -(1324)+(1342)+(3124).\\
(2413) &\equiv -(1324)+(1423)+(2314).\\
(1432) &\equiv (1234)-(1243)-(1324)+(1342)+(1423).\\
(2143) &\equiv -(1234)+(1243)+(2134).
\end{align*}
This is the end of the examples.
\end{example}

The normal form congruences determine the structure constants for each
algebra with respect to its canonical basis.

\begin{prop}
Let $b_{\sigma,\tau}^\pi$ be the structure constants of the algebra
$\mathbf{B}(n,r)$ with respect to its canonical basis, defined by the
equations
\[
\overline{\sigma\tau} =  \bar{\sigma}\bar{\tau} = \sum_{\pi \in
  \B_{n,r}} b_{\sigma,\tau}^\pi\, \bar{\pi}
\]
for any $\sigma,\tau$ in $\B_{n,r}$. The right hand side of the above
is given by the normal form of the product $\sigma\tau$ in $\Sym_r$,
hence the structure constant $b_{\sigma,\tau}^\pi$ depends only on
$\sigma\tau$ and $\pi$. The analogous statements apply to the
structure constants of $\mathbf{C}(n,r)$ with respect to its canonical
basis.
\end{prop}

Recall that $\Sym_r$ is a subgroup of the multiplicative monoid
$\Ptn_r(n)^\times$ of invertible elements in the partition algebra
$\Ptn_r(n)$. This immediately implies that
\begin{equation}
\mathbf{C}(n,r) = \End_{\Ptn_r(n)}(\V^{\otimes r})
\subset \End_{\Sym_r}(\V^{\otimes r}) = S(n,r).
\end{equation}
In other words, the quotient algebra $\Bbbk[W_n]/\ker(\Psi_{n,r})$ is
isomorphic to a subalgebra of the Schur algebra.  

Recall that $\Lambda(n,r)$ is defined in \cite{Green}*{\S3.1} as the
set of orbits of the action of $\Sym_r$ on $I(n,r) = [n]^r$. Elements
of $\Lambda(n,r)$ are called \emph{weights}; we already encountered
this notion in the paragraph following Lemma~\ref{lem:rearranging}. A
weight $\alpha$ is specified by the vector $\alpha = (\alpha_1, \dots,
\alpha_n)$ such that $\alpha_j$ counts the number of times that $j$
appears in $\ui = (i_1, \dots, i_r)$, for any $\ui \in I(n,r)$, where
$j$ varies over $[n]$. The action of $W_n$ on $\Lambda(n,r)$ is given
by
\begin{equation}
w^{-1} \alpha = (\alpha_{w(1)}, \dots, \alpha_{w(n)}).
\end{equation}
The following result answers a question of R.~Rouquier.

\begin{prop}
In terms of Green's basis $\xi_{\ui,\uj}$ of the Schur algebra
$S(n,r)$, the canonical basis elements $\bar{w}$ for $w \in \C_{n,r}$
satisfy the formula
\[
\bar{w} = \textstyle \sum_{\ui} \xi_{\ui,w(\ui)} 
\]
where the sum is carried out over any set $\mathcal{O}(n,r)$ of orbit
representatives of $\Lambda(n,r)$. 
\end{prop}

\begin{proof}
By \cite{Green}*{\S2.6}, the matrix of $\xi_{\ui,\uj}$ with respect to
the basis $\vv_{\uj} = \vv_{j_1} \otimes \cdots \otimes \vv_{j_r}$ is
given by the $n^r \times n^r$ matrix $T(\omega) =
[T(\omega)_{\up,\uq}]_{\up,\uq}$ defined by
\[
T(\omega)_{\up,\uq} =
\begin{cases}
  1 & \text{ if } (\up,\uq) \sim (\ui,\uj) \\
  0 & \text{ otherwise,}
\end{cases}
\]
where $\omega \in \Omega(n,r)$ is the $\Sym_r$-orbit containing the
pair $(\ui,\uj)$. On the other hand, the matrix of $\Psi(w)$ with
respect to the same basis (for any $w \in W_n$) is the Kronecker power
$P(w)^{\otimes r}$, where $P(w) = [\delta_{i,w(j)}]_{i,j \in [n]}$ is
the permutation matrix representing $w$ with respect to the basis
$\{\vv_1,\dots, \vv_n\}$.  Thus, writing $w(\uj) = (w(j_1), \dots,
w(j_r))$ for any $\uj \in [n]^r$, we have
\[
\Psi(w) = P(w)^{\otimes r} = [\delta_{\ui, w(\uj)}]_{\ui,\uj \in
  [n]^r}.
\]
By expressing the matrices of $\Psi(w)$ and $\xi_{\ui,\uj}$ in terms
of the matrix units, one deduces the result.
\end{proof}

\subsection*{Acknowledgements}
The authors are grateful to the anonymous referee for a number of
insightful suggestions for improvement.  The first author was funded
by EPSRC fellowship grant EP/V00090X/1.  The second and third authors
thank the Isaac Newton Institute for Mathematical Sciences, Cambridge,
for providing support during the 2022 programme \emph{Groups,
representations and applications: new perspectives}, under the EPSRC
grant EP/K032208/1.

\begin{bibdiv}
\begin{biblist}*{labels={alphabetic}}

\bib{BD}{article}{
   author={Benson, David},
   author={Doty, Stephen},
   title={Schur-Weyl duality over finite fields},
   journal={Arch. Math. (Basel)},
   volume={93},
   date={2009},
   number={5},
   pages={425--435},
}

\bib{BDM}{article}{
  author={Bowman, Chris},
  author={Doty, Stephen},
  author={Martin, Stuart},
  title={Integral Schur--Weyl duality for partition algebras},
  journal={Algebraic Combinatorics},
  volume={5},
  number={2},
  date={2022},
  pages={371--399},
}

\bib{BDM:kernel}{article}{
   author={Bowman, Chris},
   author={Doty, Stephen},
   author={Martin, Stuart},
   title={An integral second fundamental theorem of invariant theory for
   partition algebras},
   journal={Represent. Theory},
   volume={26},
   date={2022},
   pages={437--454},
}

\bib{Cruz}{article}{
   author={Cruz, Tiago},
   title={Schur-Weyl duality over commutative rings},
   journal={Comm. Algebra},
   volume={47},
   date={2019},
   number={4},
   pages={1619--1628},
}

\bib{DP}{article}{
   author={de Concini, C.},
   author={Procesi, C.},
   title={A characteristic free approach to invariant theory},
   journal={Advances in Math.},
   volume={21},
   date={1976},
   number={3},
   pages={330--354},
}

\bib{Donkin}{article}{
   author={Donkin, Stephen},
   title={Double centralisers and annihilator ideals of Young permutation
   modules},
   journal={J. Algebra},
   volume={591},
   date={2022},
   pages={249--288},
}

 \bib{D}{article}{
   author={Doty, Stephen R.},
   title={Doubly stochastic matrices and Schur--Weyl duality for partition
   algebras},
   journal={Electron. J. Combin.},
   volume={29},
   date={2022},
   number={4},
   pages={Paper No. 4.28, 18 pages},
}

\bib{Fan}{book}{
   author={Fan, C. Kenneth},
   title={A Hecke algebra quotient and properties of commutative elements of
   a Weyl group},
   note={Thesis (Ph.D.)--Massachusetts Institute of Technology},
   publisher={ProQuest LLC, Ann Arbor, MI},
   date={1995},
}

\bib{Fan:97}{article}{
   author={Fan, C. Kenneth},
   title={Structure of a Hecke algebra quotient},
   journal={J. Amer. Math. Soc.},
   volume={10},
   date={1997},
   number={1},
   pages={139--167},
}

\bib{Fulton}{book}{
   author={Fulton, William},
   title={Young tableaux},
   series={London Mathematical Society Student Texts},
   volume={35},
   publisher={Cambridge University Press, Cambridge},
   date={1997},
}

\bib{Geck}{article}{
   author={Geck, Meinolf},
   title={Kazhdan-Lusztig cells and the Murphy basis},
   journal={Proc. London Math. Soc. (3)},
   volume={93},
   date={2006},
   number={3},
   pages={635--665},
}

\bib{GL}{article}{
   author={Graham, J. J.},
   author={Lehrer, G. I.},
   title={Cellular algebras},
   journal={Invent. Math.},
   volume={123},
   date={1996},
   number={1},
   pages={1--34},
}

\bib{Green}{book}{
   author={Green, J. A.},
   title={Polynomial representations of ${\rm GL}_{n}$},
   series={Lecture Notes in Mathematics},
   volume={830},
   edition={Second corrected and augmented edition},
   note={With an appendix on Schensted correspondence and Littelmann paths
   by K. Erdmann, Green and M. Schocker},
   publisher={Springer, Berlin},
   date={2007},
}

\bib{greene}{article}{
   author={Greene, Curtis},
   title={An extension of Schensted's theorem},
   journal={Advances in Math.},
   volume={14},
   date={1974},
   pages={254--265},
   issn={0001-8708},
}

\bib{HR}{article}{
   author={Halverson, Tom},
   author={Ram, Arun},
   title={Partition algebras},
   journal={European J. Combin.},
   volume={26},
   date={2005},
   number={6},
   pages={869--921},
}

\bib{Haerterich}{article}{
   author={H\"{a}rterich, Martin},
   title={Murphy bases of generalized Temperley-Lieb algebras},
   journal={Arch. Math. (Basel)},
   volume={72},
   date={1999},
   number={5},
   pages={337--345},
}

\bib{KL}{article}{
   author={Kazhdan, David},
   author={Lusztig, George},
   title={Representations of Coxeter groups and Hecke algebras},
   journal={Invent. Math.},
   volume={53},
   date={1979},
   number={2},
   pages={165--184},
}

\bib{Knuth}{book}{
   author={Knuth, Donald E.},
   title={The art of computer programming, Second edition.
     Vol. 3: Sorting and searching},
   publisher={Addison-Wesley, Reading, MA},
   date={1998},
}

\bib{KX}{article}{
   author={K\"{o}nig, Steffen},
   author={Xi, Changchang},
   title={On the structure of cellular algebras},
   conference={
      title={Algebras and modules, II},
      address={Geiranger},
      date={1996},
   },
   book={
      series={CMS Conf. Proc.},
      volume={24},
      publisher={Amer. Math. Soc., Providence, RI},
   },
   date={1998},
   pages={365--386},
}


\bib{RSS}{article}{
   author={Raghavan, K. N.},
   author={Samuel, Preena},
   author={Subrahmanyam, K. V.},
   title={RSK bases and Kazhdan-Lusztig cells},
   journal={Ann. Inst. Fourier (Grenoble)},
   volume={62},
   date={2012},
   number={2},
   pages={525--569},
}

\bib{Schensted}{article}{
   author={Schensted, C.},
   title={Longest increasing and decreasing subsequences},
   journal={Canadian J. Math.},
   volume={13},
   date={1961},
   pages={179--191},
}

\bib{Stembridge}{article}{
   author={Stembridge, John R.},
   title={On the fully commutative elements of Coxeter groups},
   journal={J. Algebraic Combin.},
   volume={5},
   date={1996},
   number={4},
   pages={353--385},
}

\end{biblist}
\end{bibdiv}

\end{document}